\newtheorem{theorem}{Theorem}
\theoremstyle{plain}
\newtheorem{conjecture}{Conjecture}
\newtheorem{lemma}{Lemma}
\newtheorem{remark}{Remark}
\numberwithin{equation}{section}
\begin{document}
\title[Monotonicity and Sharp inequalities for type-power means]{The
monotonicity results and sharp inequalities for some power-type means of two
arguments}
\address{Power Supply Service Center, ZPEPC Electric Power Research
Institute, Hangzhou, Zhejiang, China, 31001}
\email{yzhkm@163.com}
\date{October 16, 2012}
\subjclass[2010]{Primary 26E60, 26D05; Secondary 26A48}
\keywords{Mean, power-type mean, sharp inequality}
\thanks{This paper is in final form and no version of it will be submitted
for publication elsewhere.}

\begin{abstract}
For $a,b>0$ with $a\neq b$, we define 
\begin{equation*}
M_{p}=M^{1/p}(a^{p},b^{p})\text{ if }p\neq 0\text{ and }M_{0}=\sqrt{ab},
\end{equation*}%
where $M=A,He,L,I,P,T,N,Z$ and $Y$ stand for the arithmetic mean, Heronian
mean, logarithmic mean, identric\ (exponential) mean, the first Seiffert
mean, the second Seiffert mean, Neuman-S\'{a}ndor mean, power-exponential
mean and exponential-geometric mean, respectively. Generally, if $M$ is a
mean of $a$ and $b$, then $M_{p}$ is also, and call "power-type mean". We
prove the power-type means $P_{p}$, $T_{p}$, $N_{p}$, $Z_{p}$ are increasing
in $p$ on $\mathbb{R}$ and establish sharp inequalities among power-type
means $A_{p}$, $He_{p}$, $L_{p}$, $I_{p}$, $P_{p}$, $N_{p}$, $Z_{p}$, $Y_{p}$%
. From this a very nice chain of inequalities for these means 
\begin{equation*}
L_{2}<P<N_{1/2}<He<A_{2/3}<I<Z_{1/3}<Y_{1/2}.
\end{equation*}%
follows. Lastly, a conjecture is proposed.
\end{abstract}

\author{Zhen-Hang Yang}
\maketitle

\section{Introduction}

There are many basic bivariate means of positive numbers $a$ and $b$, such as

\begin{itemize}
\item the arithmetic mean $A$ defined by 
\begin{equation}
A\left( a,b\right) =\frac{a+b}{2};  \label{A}
\end{equation}

\item geometric mean $G$ defined as \bigskip 
\begin{equation}
G\left( a,b\right) =\sqrt{ab};  \label{G}
\end{equation}

\item Heronian mean $He$ defined by 
\begin{equation}
He\left( a,b\right) =\frac{a+b+\sqrt{ab}}{3};  \label{He}
\end{equation}

\item logarithmic mean $L$ defined by%
\begin{equation}
L\left( a,b\right) =\frac{a-b}{\ln a-\ln b}\text{ if }a\neq b\text{ and }%
L\left( a,a\right) =a;  \label{L}
\end{equation}

\item identric (exponential) mean defined by 
\begin{equation}
I\left( a,b\right) =e^{-1}\left( \frac{a^{a}}{b^{b}}\right) ^{1/(a-b)}\text{
if }a\neq b\text{ and }I\left( a,a\right) =a;  \label{I}
\end{equation}

\item the first Seiffert mean $P$, defined in \cite{Seiffert.42(1987)} as 
\begin{equation}
P\left( a,b\right) =\frac{a-b}{2\arcsin \frac{a-b}{a+b}}\text{ if }a\neq b%
\text{ and }P\left( a,a\right) =a;  \label{P}
\end{equation}

\item the second Seiffert mean $T$, defined in \cite{Seiffert.29(1995)} by 
\begin{equation}
T\left( a,b\right) =\frac{a-b}{2\arctan \frac{a-b}{a+b}}\text{ if }a\neq b%
\text{ and }T\left( a,a\right) =a;  \label{T}
\end{equation}

\item Neuman-S\'{a}ndor mean $N$, defined in \cite{Neuman.17(1)(2006)} by 
\begin{equation}
N\left( a,b\right) =\frac{a-b}{2\func{arcsinh}\frac{a-b}{a+b}}\text{ if }%
a\neq b\text{ and }N\left( a,a\right) =a;  \label{N}
\end{equation}

\item power-exponential mean, the special case of Gini means \cite%
{Gini.13.1938}, defined by 
\begin{equation}
Z\left( a,b\right) =a^{\frac{a}{a+b}}b^{\frac{b}{a+b}};  \label{Z}
\end{equation}

\item exponential-geometric mean, defined in \cite{Yang.6(4).101.2005} by 
\begin{equation}
Y\left( a,b\right) =I\exp \left( 1-\frac{G^{2}}{L^{2}}\right) ,  \label{Y}
\end{equation}%
where $I,G,L$ denote the identric mean, geometric mean and logarithmic mean
of positive numbers $a$ and $b$.
\end{itemize}

We define%
\begin{equation}
M_{p}:=M_{p}(a,b)=M^{1/p}(a^{p},b^{p})\text{ if }p\neq 0\text{ and }M_{0}=%
\sqrt{ab},  \label{M_p...}
\end{equation}%
where $M=A,He,L,I,P,T,N,Z$ and $Y$ stand for the arithmetic mean, Heroian
mean, logarithmic mean, identric (exponential) mean, the first Seiffert
mean, the second Seiffert mean, Neuman-S\'{a}ndor mean, power-exponential
mean and exponential-geometric mean, which are defined by (\ref{A})- (\ref{Y}%
), respectively. It is known that $A_{p}$ is the classical power mean which
is increasing with $p$ on $\mathbb{R}$.

Also, we note that for $t\in \mathbb{R}$ 
\begin{equation}
M_{pt}^{t}\left( a,b\right) =M^{1/p}\left( a^{pt},b^{pt}\right) =M_{p}\left(
a^{t},b^{t}\right) .  \label{M_pt}
\end{equation}

In the most cases, ones more prefer to evaluate a given more complicated
mean $M$ by a simpler one such as arithmetic mean, geometric mean, Heroian
mean and power mean etc. For example, for $a,b>0$ with $a\neq b$, Lin \cite%
{Lin.81.1974} gave a best estimation of the logarithmic mean $L$ by power
means, that is, 
\begin{equation}
A_{0}\left( a,b\right) <L\left( a,b\right) <A_{1/3}\left( a,b\right) .
\label{L-A_p}
\end{equation}%
Jiao and Cau proved in \cite{Jiagao.4(4).2003}\ that 
\begin{equation}
L\left( a,b\right) <He_{1/2}\left( a,b\right) <A_{1/3}\left( a,b\right) .
\label{L-He_p}
\end{equation}%
Stolarsky \cite{Stolarsky.87.1980} and Pittenger \cite{Pittinger.680(1980)}
showed that the inequalities 
\begin{equation}
A_{2/3}\left( a,b\right) <I\left( a,b\right) <A_{\ln 2}\left( a,b\right)
\label{I-A_p}
\end{equation}%
hold, where the constants $2/3$ and $\ln 2$ are the best possible. The
following sharp double inequality 
\begin{equation}
A_{\ln 2/\ln 3}\left( a,b\right) <He\left( a,b\right) <A_{2/3}\left(
a,b\right)  \label{He-A_p}
\end{equation}%
is due to Alzer and Janous \cite{Alzer.13(173-178)(1987)}.

For the first Seiffert mean, Jagers first established in \cite%
{Jagers.12(1994)} (also see \cite{Hasto.3(5)(2002)}) that 
\begin{equation*}
A_{1/2}\left( a,b\right) <P\left( a,b\right) <A_{2/3}\left( a,b\right) ,
\end{equation*}%
which has been improved by H\"{a}st\"{o} in \cite{Hasto.7(1)(2004)} as 
\begin{equation}
A_{\log _{\pi }2}\left( a,b\right) <P\left( a,b\right) <A_{2/3}\left(
a,b\right) ,  \label{P-A_p}
\end{equation}%
where $\log _{\pi }2$ and $2/3$ are the best possible. In 1995, Seiffert 
\cite{Seiffert.29(1995)} indicated that 
\begin{equation}
A\left( a,b\right) <T\left( a,b\right) <A_{2}\left( a,b\right) ,
\label{Seiffert}
\end{equation}%
which was refined by Yang \cite{Yang.arXiv:1206.5494} as%
\begin{equation}
A_{\log _{\pi /2}2}\left( a,b\right) <T\left( a,b\right) <A_{5/3}\left(
a,b\right) ,  \label{T-A_p}
\end{equation}%
where $\log _{\pi /2}2$ and $3/5$ can not be improved. Utilizing (\ref{M_pt}%
) the second one of \ref{T-A_p} can be written as 
\begin{equation}
T_{2/5}\left( a,b\right) <A_{2/3}\left( a,b\right) .  \label{T_p-A_q}
\end{equation}%
Chu et al. showed in \cite{Chu.146945.2010} an optimal double inequality 
\begin{equation}
He_{\frac{\ln 3}{\ln \left( \pi /2\right) }}<T\left( a,b\right)
<He_{5/2}\left( a,b\right) ,  \label{T-He_p}
\end{equation}%
the second one in which is equivalent to 
\begin{equation}
T_{2/5}\left( a,b\right) <He\left( a,b\right) .  \label{T_P-He}
\end{equation}

For the Neuman-S\'{a}ndor mean, Yang \cite{Yang.arXiv:1208.0895} has
presented sharp bounds in terms of power means, that is,

\begin{equation}
\allowbreak A_{\frac{\ln 2}{\ln \ln \left( 3+2\sqrt{2}\right) }}\left(
a,b\right) <N\left( a,b\right) <A_{4/3}\left( a,b\right) ,  \label{N-A_p}
\end{equation}%
which by using (\ref{M_pt}) implies that 
\begin{equation}
N_{1/2}\left( a,b\right) <A_{2/3}\left( a,b\right) .  \label{N_p-A_q}
\end{equation}

For the power-exponential mean $Z$, from the comparison theorem for Gini
means given by P\'{a}les in \cite{Pales.131(1988)} (also see \cite%
{Acu.9(1-2)(2001)}, \cite{Sandor.12(4)(2004)}, \cite{Yang.10(3).2007}) it is
easy to obtain the following optimal inequality:%
\begin{equation}
Z\left( a,b\right) >A_{2}\left( a,b\right) ,  \label{Z-A_p}
\end{equation}%
which also can be written as 
\begin{equation}
Z_{1/3}\left( a,b\right) >A_{2/3}\left( a,b\right) ,  \label{Z_p-A_q}
\end{equation}

On the other hand, S\'{a}ndor showed in \cite{Sandor.76(2001)} that%
\begin{equation}
L<P<I.  \label{L-P-I}
\end{equation}%
Neuman and S\'{a}ndor \cite{Neuman.17(1)(2006)}\ established the following
chain of inequalities for means:%
\begin{equation*}
G<L<P<A<N<T<A_{2}.
\end{equation*}%
The following chain of inequalities for means%
\begin{equation}
L_{2}<He<A_{2/3}<I<Z_{1/3}<Y_{1/2}  \label{Yang1}
\end{equation}%
is due to Yang \cite[(5.17)]{Yang.10(3).2007}. Recently, Costin and Toader 
\cite{Costin.IJMMS.2012.inprint} presented a nice separation of some
Seiffert type means by power means:%
\begin{equation}
G<L<A_{1/2}<P<A<N<T<A_{2},  \label{Costin-Toader}
\end{equation}%
which has been improved by Yang \cite{Yang.arXiv:1208.0895} as 
\begin{eqnarray}
A_{0} &<&L<A_{1/3}<A_{\ln _{\pi }2}<P<A_{2/3}<I<A_{\ln 2}  \label{Yang2} \\
&<&A_{\frac{\ln 2}{\ln \ln \left( 3+2\sqrt{2}\right) }}<N<A_{4/3}<A_{\log
_{\pi /2}2}<T<A_{5/3}.  \notag
\end{eqnarray}

Motived by these inequalities for bivariate means, the purpose of this paper
is to investigate the monotonicities of $P_{p},T_{p},N_{p},Z_{p}$ in $p$ (in
the next section, we will prove that $M_{p}$ is also a mean and call it
"power-type mean") and establish the relations among $M_{p}$ defined by (\ref%
{M_p...}).

\section{The monotonicities of power-type means}

In general, a function $M:\mathbb{R}_{+}^{2}\mapsto \mathbb{R}$ is called a
bivariate mean if 
\begin{equation*}
\min \left( a,b\right) \leq M\left( a,b\right) \leq \max \left( a,b\right)
\end{equation*}%
holds for all $a,b>0$. Clearly, each bivariate mean is reflexive, that is, 
\begin{equation*}
M\left( a,a\right) =a\text{ for all }a>0.
\end{equation*}%
A bivariate mean is symmetric if 
\begin{equation*}
M\left( a,b\right) =M\left( b,a\right)
\end{equation*}%
holds for all $a,b>0$. It is said to be homogeneous (of degree one) if 
\begin{equation*}
M\left( ta,tb\right) =tM\left( a,b\right)
\end{equation*}%
holds for all $a,b,t>0$.

Let $M$ be a differentiable mean on $\mathbb{R}_{+}^{2}$. Now we introduce
the function $M_{p}:\mathbb{R}_{+}^{2}\mapsto \mathbb{R}$ defined by 
\begin{equation}
M_{p}\left( a,b\right) =M^{1/p}\left( a^{p},b^{p}\right) \text{ if }p\neq 0%
\text{ and }M_{0}\left( a,b\right) =a^{M_{x}\left( 1,1\right)
}b^{M_{y}\left( 1,1\right) },  \label{M_p}
\end{equation}%
where $M_{x}\left( x,y\right) $, $M_{y}\left( x,y\right) $ stand for the
first-order partial derivatives with respect to the first and second
component of $M(x,y)$,$\ $respectively.

The following theorem reveals that $M_{p}$ is also a mean, and it is called "%
$M$ mean of order $p$". Since the form of $M_{p}$ is similar to power mean $%
A_{p}$, it is also known simply as "power-type mean".

\begin{theorem}
\label{Theorem 2.1}Let $M$ be a differentiable mean on $\mathbb{R}_{+}^{2}$
and $M_{p}$ be defined by (\ref{M_p}). Then $M_{p}$ is also a mean. In
particular, $M_{0}=G$ if $M$ is symmetric.
\end{theorem}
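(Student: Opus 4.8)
The plan is to prove the two assertions separately. For $p\neq 0$, I would argue directly from the hypothesis that $M$ is a mean: since $\min(a^{p},b^{p})\le M(a^{p},b^{p})\le \max(a^{p},b^{p})$ for all $a,b>0$, I would raise all three terms to the power $1/p$ and track how the inequalities behave. The key observation is that the map $t\mapsto t^{1/p}$ is increasing on $\mathbb{R}_{+}$ when $p>0$ and decreasing when $p<0$; in either case one checks that $\min(a^{p},b^{p})^{1/p}=\min(a,b)$ and $\max(a^{p},b^{p})^{1/p}=\max(a,b)$ (for $p>0$ directly, for $p<0$ because the decreasing map swaps the roles of min and max, and $x\mapsto x^{p}$ is itself decreasing, so the two reversals cancel). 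Hence $\min(a,b)\le M_{p}(a,b)\le\max(a,b)$, which is exactly the mean property.

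For the value at $p=0$, I would show that $M_{0}(a,b)=a^{M_{x}(1,1)}b^{M_{y}(1,1)}$ lies between $\min(a,b)$ and $\max(a,b)$. Here the main input is Euler's identity for homogeneous functions — or, more elementarily, the fact that differentiating the reflexivity relation $M(x,x)=x$ gives $M_{x}(1,1)+M_{y}(1,1)=1$, and differentiating $M(tx,ty)=tM(x,y)$ (if homogeneity is assumed) or just using that a mean satisfies $M(x,x)=x$ together with monotonicity-type constraints forces $M_{x}(1,1),M_{y}(1,1)\ge 0$. Given $\alpha:=M_{x}(1,1)\ge 0$, $\beta:=M_{y}(1,1)\ge 0$ with $\alpha+\beta=1$, the quantity $a^{\alpha}b^{\beta}$ is a weighted geometric mean of $a$ and $b$, and weighted geometric means obviously lie between $\min(a,b)$ and $\max(a,b)$. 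I would present this as: $\min(a,b)=\min(a,b)^{\alpha+\beta}\le a^{\alpha}b^{\beta}\le\max(a,b)^{\alpha+\beta}=\max(a,b)$. It is also worth remarking that this value is precisely $\lim_{p\to 0}M_{p}(a,b)$, which motivates the definition, though that continuity claim is not needed for the theorem as stated.

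The particular case is then immediate: if $M$ is symmetric, then $M(x,y)=M(y,x)$ gives $M_{x}(1,1)=M_{y}(1,1)$, and combined with $M_{x}(1,1)+M_{y}(1,1)=1$ this forces $M_{x}(1,1)=M_{y}(1,1)=1/2$, so $M_{0}(a,b)=a^{1/2}b^{1/2}=\sqrt{ab}=G(a,b)$.

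The one point requiring care — the main obstacle, such as it is — is justifying that $M_{x}(1,1)$ and $M_{y}(1,1)$ are nonnegative and sum to $1$ purely from "differentiable mean," since I have only assumed the sandwiching inequality and differentiability, not homogeneity. The sum-to-one part follows by differentiating $M(x,x)=x$ along the diagonal. For nonnegativity, I would fix $b=1$ and note that $x\mapsto M(x,1)$ maps a neighbourhood of $1$ into an interval with endpoints $\min(x,1)$ and $\max(x,1)$, both of which agree with the value $1$ at $x=1$; a one-sided difference quotient argument (comparing $M(1+h,1)$ with $1$ for small $h>0$ and $h<0$) shows $M_{x}(1,1)\ge 0$, and symmetrically $M_{y}(1,1)\ge 0$. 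This is the only place where one must be slightly careful rather than formula-pushing; everything else is a routine manipulation of the defining inequality.
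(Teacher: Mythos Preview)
Your proposal is correct and follows essentially the same route as the paper. Both argue the $p\neq 0$ case by raising the mean inequality $\min(a^{p},b^{p})\le M(a^{p},b^{p})\le\max(a^{p},b^{p})$ to the power $1/p$; both obtain $M_{x}(1,1)+M_{y}(1,1)=1$ by differentiating the reflexivity identity $M(x,x)=x$, bound the individual partial derivatives via the difference quotient $\dfrac{M(x+\Delta x,x)-x}{\Delta x}$ (the paper records the sharper conclusion $M_{x}(1,1)\in(0,1)$, though your $\ge 0$ is all that is needed for the mean property), and handle the symmetric case identically by deducing $M_{x}(1,1)=M_{y}(1,1)=\tfrac12$.
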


\begin{proof}
We distinguish two cases to prove it.

Case 1: $p\neq 0$. Without loss of generality, we assume that $p>0$ and $%
b>a>0$. Since $M$ is a mean, we have $a^{p}<M\left( a^{p},b^{p}\right)
<b^{p} $, which implies that $a<M^{1/p}\left( a^{p},b^{p}\right) <b$, that
is, $M^{1/p}\left( a^{p},b^{p}\right) $ is also a mean.

Case 2: $p=0$. Clearly, it suffices to show that 
\begin{equation*}
M_{x}\left( 1,1\right) ,M_{y}\left( 1,1\right) \in \left( 0,1\right) \text{
and }M_{x}\left( 1,1\right) +M_{y}\left( 1,1\right) =1.\text{ }
\end{equation*}%
In fact, it is known that%
\begin{eqnarray*}
x &<&M\left( x+\vartriangle x,x\right) <x+\vartriangle x\text{ if }%
\vartriangle x>0, \\
x+ &\vartriangle &x<M\left( x+\vartriangle x,x\right) <x\text{ if }%
\vartriangle x<0,
\end{eqnarray*}%
which can be written as%
\begin{equation*}
0<\frac{M\left( x+\vartriangle x,x\right) -x}{\vartriangle x}<1.
\end{equation*}%
And so, 
\begin{equation*}
M_{x}\left( x,x\right) =\lim_{\vartriangle x\rightarrow 0}\frac{M\left(
x+\vartriangle x,x\right) -M\left( x,x\right) }{\vartriangle x}%
=\lim_{\vartriangle x\rightarrow 0}\frac{M\left( x+\vartriangle x,x\right) -x%
}{\vartriangle x}\in \left( 0,1\right) .
\end{equation*}%
Similarly, we can prove $M_{y}\left( x,x\right) \in \left( 0,1\right) $.

Differentiating for the identity $M\left( x,x\right) =x$ yields $M_{x}\left(
x,x\right) +M_{y}\left( x,x\right) =1$. Hence, $M_{0}$ is still a mean.

Particularly, if $M$ is symmetric, that is, $M\left( x,y\right) =M\left(
y,x\right) $, then it is easy to obtain that $M_{x}\left( x,y\right)
=M_{y}\left( y,x\right) $. From this it is deduced that $M_{x}\left(
x,x\right) =M_{y}\left( x,x\right) $, which together with $M_{x}\left(
x,x\right) +M_{y}\left( x,x\right) =1$ leads to $M_{x}\left( x,x\right)
=M_{y}\left( x,x\right) =1/2$. Thus, $M_{0}=G$.

This completes the proof.
\end{proof}

Applying the results in \cite{Yang.6(4).101.2005}, we can give a sufficient
condition for the monotonicity of $p$-order $M$ mean $M_{p}$.

\begin{lemma}
\label{Lemma 2.1}Let $M$ be a homogeneous and differentiable mean. Then the
function $M_{p}$ defined by (\ref{M_p}) is strictly increasing (decreasing)
in $p$ on $\mathbb{R}$ if $\mathcal{I}=\left( \ln M\right) _{xy}>(<)0$.
\end{lemma}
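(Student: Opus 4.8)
The plan is to reduce the monotonicity of $p \mapsto M_p(a,b)$ to a known criterion from \cite{Yang.6(4).101.2005} by rewriting $M_p$ in a form that exposes its dependence on $p$ as a one-parameter family built from the single function $M$. Since $M$ is homogeneous of degree one, we may normalize: write $M(a^p,b^p) = b^p M(t^p,1)$ where $t = a/b \in (0,1)$ (assuming WLOG $0 < a < b$), so that
\begin{equation*}
\ln M_p(a,b) = \ln b + \frac{1}{p}\ln f(ps), \qquad s = \ln t < 0,
\end{equation*}
where $f(u) := M(e^u,1)$ is a fixed smooth function of one real variable, with $f(0)=1$. The problem is thus to show that $g(p) := \frac{1}{p}\ln f(ps)$ is strictly increasing (resp. decreasing) in $p$ on $\mathbb{R}\setminus\{0\}$, with the value at $p=0$ filled in by the limit $g(0^{+}) = s\,\frac{f'(0)}{f(0)} = s\,M_x(1,1)$ — which matches the definition of $M_0$ in \eqref{M_p} after accounting for the $\ln b$ term and $M_y(1,1) = 1 - M_x(1,1)$.

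The main computation is to differentiate $g$ in $p$. One finds
\begin{equation*}
g'(p) = \frac{1}{p^2}\left( \frac{ps\, f'(ps)}{f(ps)} - \ln f(ps) \right).
\end{equation*}
Writing $h(u) := u\,\frac{f'(u)}{f(u)} - \ln f(u)$, so that $g'(p) = h(ps)/p^2$, we have $h(0)=0$ and $h'(u) = u\bigl(\ln f(u)\bigr)''$. Hence the sign of $g'(p)$ is governed entirely by the sign of $\bigl(\ln f\bigr)''$: if $(\ln f)'' > 0$ on $\mathbb{R}$, then $h'(u)$ has the sign of $u$, so $h(u) > 0$ for all $u \neq 0$, whence $g'(p) > 0$ for all $p \neq 0$ and $g$ is strictly increasing; the case $(\ln f)'' < 0$ is symmetric. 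It then remains to identify $(\ln f)''(u)$ with the stated quantity $\mathcal{I} = (\ln M)_{xy}$. By the chain rule, $(\ln f)'(u) = e^u (\ln M)_x(e^u,1)$ and a second differentiation, together with the homogeneity relation $x(\ln M)_{xx} + y(\ln M)_{xy} = 0$ (obtained by differentiating Euler's identity $x(\ln M)_x + y(\ln M)_y = 0$, or equivalently noting $(\ln M)_x$ is homogeneous of degree $-1$), collapses the expression to $(\ln f)''(u) = -e^{2u}(\ln M)_{xx}(e^u,1) = e^u (\ln M)_{xy}(e^u,1)$ (up to the positive factor forced by homogeneity), which has the same sign as $\mathcal{I}$ everywhere on $\mathbb{R}_+^2$ by homogeneity of the mixed partial.

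The hard part is bookkeeping rather than any deep idea: one must carefully verify that the normalization and the substitution $u = ps$ are legitimate on both signs of $p$ (the case $p<0$ swaps the roles of $a^p$ and $b^p$ but also flips the sign of $s$, and these two sign changes must be checked to be consistent with the claimed monotonicity), and one must confirm the boundary behavior at $p=0$ matches \eqref{M_p} so that the asserted monotonicity is genuinely on all of $\mathbb{R}$ and not just on $\mathbb{R}\setminus\{0\}$. Since the cited paper \cite{Yang.6(4).101.2005} presumably already contains the core one-variable statement "$\frac1p\ln f(ps)$ is monotone in $p$ iff $\ln f$ is convex/concave", the proof here should mostly consist of quoting that result and performing the homogeneity reduction linking $(\ln f)''$ to $(\ln M)_{xy}$; I would present it in that order — reduction first, then invoke the lemma.
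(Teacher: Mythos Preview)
The paper does not actually prove Lemma~2.1: it is stated as a direct consequence of the results in \cite{Yang.6(4).101.2005}, with no argument given. Your reduction---normalize via homogeneity, set $f(u)=M(e^{u},1)$, study $g(p)=p^{-1}\ln f(ps)$, and show $g'(p)=h(ps)/p^{2}$ with $h'(u)=u(\ln f)''(u)$---is the standard route and almost certainly what the cited reference does, so structurally you are on target.

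There is, however, a genuine sign slip in your final identification. Since $M$ is homogeneous of degree one, $\ln M$ satisfies $x(\ln M)_{x}+y(\ln M)_{y}=1$, not $0$; differentiating in $x$ gives $x(\ln M)_{xx}+y(\ln M)_{xy}=-(\ln M)_{x}$, not $0$. Carrying this through,
\[
(\ln f)''(u)=e^{u}(\ln M)_{x}(e^{u},1)+e^{2u}(\ln M)_{xx}(e^{u},1)=-e^{u}(\ln M)_{xy}(e^{u},1),
\]
so $(\ln f)''$ has the \emph{opposite} sign to $\mathcal{I}$. (Sanity check: for $M=A$ one has $\mathcal{I}=-1/(x+y)^{2}<0$ while $(\ln f)''(u)=e^{u}/(e^{u}+1)^{2}>0$, and indeed the power mean $A_{p}$ is increasing.) With the corrected sign, your argument yields: $\mathcal{I}<0\Rightarrow M_{p}$ increasing, and $\mathcal{I}>0\Rightarrow M_{p}$ decreasing. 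This is exactly how the paper \emph{uses} the lemma in the proof of Theorem~2.2 (it verifies $\mathcal{I}<0$ for $P,T,N$ to conclude monotone \emph{increase}), so the lemma as printed appears to have the parenthetical pairing reversed. Your proof sketch is therefore correct in method; just fix the Euler identities and flip the final sign, and note the apparent typo in the stated lemma.
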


For $M=A,He,L,I,Y$, it has been proven that $\mathcal{I}=\left( \ln M\right)
_{xy}<(>)0$ in \cite{Yang.149286.2008}, and so all the corresponding $p$%
-order arithmetic mean (i.e., power mean) $A_{p}$, $p$-order Heroian mean $%
He_{p}$, $p$-order logarithmic mean $L_{p}$, $p$-order identric
(exponential) mean $I_{p}$ and $p$-order exponential-geometric mean $Y_{p}$
are strictly increasing in $p$ on $\mathbb{R}$. Now we shall show that the
first $p$-order Seiffert mean $P_{p}$, the second $p$-order Seiffert mean $%
T_{p}$, $p$-order Neuman-S\'{a}ndor mean $N_{p}$ and $p$-order
power-exponential mean $Z_{p}$ have the same monotonicity.

\begin{theorem}
\label{Theorem 2.2}The first $p$-order Seiffert mean $P_{p}$, the second $p$%
-order Seiffert mean $T_{p}$, $p$-order Neuman-S\'{a}ndor mean $N_{p}$ and $%
p $-order power-exponential mean $Z_{p}$ are strictly increasing in $p$ on $%
\mathbb{R}$.
\end{theorem}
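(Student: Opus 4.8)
The plan is to invoke Lemma~\ref{Lemma 2.1} for each of the four means $P$, $T$, $N$, $Z$, so the entire problem reduces to verifying the sign condition $\mathcal{I}=\left(\ln M\right)_{xy}>0$ for $M=P,T,N,Z$. Since each of these means is homogeneous of degree one and symmetric, I would first reduce to one variable by writing $M(x,y)=x\,f(t)$ with $t=y/x$ (or, more symmetrically, introduce $v=(y-x)/(y+x)$ so that the arcsin/arctan/arcsinh structure of $P$, $T$, $N$ becomes transparent). A standard computation shows that for a homogeneous mean, $\left(\ln M\right)_{xy}$ has the same sign as $-\frac{d}{dt}\!\left(\frac{g'(t)}{g(t)}\right)$ for a suitable single-variable function $g$, or equivalently can be packaged via the logarithmic derivative; I would set up this reduction once and then apply it uniformly.

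For the concrete verification, I would handle $Z$ first since it is the most explicit: $\ln Z=\frac{x\ln x+y\ln y}{x+y}$, so $\left(\ln Z\right)_{xy}$ is an elementary rational-plus-logarithm expression whose positivity should follow from a short estimate (essentially the convexity/tangent-line inequality for $\ln$). For $P$, $T$, $N$ I would exploit their common shape $M=\frac{y-x}{2\,\mathrm{op}^{-1}(v)}$ where $\mathrm{op}^{-1}$ is $\arcsin$, $\arctan$, or $\mathrm{arcsinh}$ and $v=\frac{y-x}{y+x}$; writing $\ln M$ in terms of $v$ and differentiating, the condition $\mathcal{I}>0$ becomes an inequality of the form "a certain function of $v$ built from $\mathrm{op}^{-1}(v)$, its derivative, and rational functions of $v$ is positive on $(0,1)$ (resp.\ $(0,\infty)$)". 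These are the kind of one-variable inequalities provable by showing the function vanishes at $v=0$ together with a sign analysis of the derivative, often after clearing denominators to reduce to a power-series comparison or to the monotonicity of an auxiliary quotient (an "$L$'Hospital monotone rule" argument).

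The main obstacle I anticipate is precisely these one-variable inequalities for $P$, $T$, $N$: unlike $Z$, there is no purely algebraic shortcut, and a naive derivative computation produces an expression whose sign is not obvious. The workable route is to isolate a ratio of two functions both vanishing at $v=0$ and apply the monotone form of L'Hospital's rule (the quotient $f/g$ is monotone if $f'/g'$ is), possibly iterating it two or three times until one reaches a manifestly signed quantity; for $T$ the relevant auxiliary function may change sign, so that case might need to be split or handled by a series estimate. I would organize the proof as four short lemmas (one per mean), each reducing $\mathcal{I}>0$ to such an auxiliary monotonicity statement, and then conclude by Lemma~\ref{Lemma 2.1}. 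I would also note that the cases $P$ and $N$ are structurally nearly identical (arcsin vs.\ arcsinh, i.e.\ $v^2\to -v^2$), so one computation can be recycled for the other with sign bookkeeping.
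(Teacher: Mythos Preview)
Your overall plan --- reduce to the sign of $(\ln M)_{xy}$ via Lemma~\ref{Lemma 2.1} --- matches the paper's strategy, but two points deserve comment.

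First, a sign caveat: the paper's proof actually verifies $(\ln M)_{xy}<0$ (not $>0$) for $P,T,N$, contrary to the direction stated in the lemma; evidently the lemma's inequality signs are typo'd, and the correct condition for \emph{increasing} is $\mathcal{I}<0$ (as one checks instantly on $M=A$, where $(\ln A)_{xy}=-1/(x+y)^2$). So your target sign should be reversed.

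Second, and more substantively, the verification is far shorter than you anticipate. Rather than passing to one variable and iterating the monotone L'Hospital rule, the paper computes $(\ln M)_{xy}$ directly in $(x,y)$ and observes that, because $M$ sits in the denominator via an inverse trigonometric or hyperbolic function, the mixed partial can be rewritten as a quadratic expression in $M$ itself with rational-in-$(x,y)$ coefficients. One then substitutes a single crude known bound --- $P>\sqrt{xy}$ for the first Seiffert mean, $T>(x+y)/2$ for the second, and $N>A^2/A_2$ for Neuman--S\'andor --- and the expression collapses to something manifestly nonpositive in one line. No series expansion, no iterated L'Hospital, no sign-splitting for $T$. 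Your proposed machinery would presumably succeed, but it is heavier than what is needed.

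For $Z$ the paper does not use Lemma~\ref{Lemma 2.1} at all: since $\ln Z_p=\dfrac{a^p\ln a+b^p\ln b}{a^p+b^p}$ is already explicit in $p$, one simply differentiates to get $\dfrac{d}{dp}\ln Z_p=\dfrac{a^pb^p(\ln a-\ln b)^2}{(a^p+b^p)^2}>0$. Your route via $(\ln Z)_{xy}$ would also work but is a detour.
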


\begin{proof}
By Theorem \ref{Lemma 2.1}, it suffices to show that $\mathcal{I}=\left( \ln
M\right) _{xy}<0$, where $M=P,T,N$.

(i) Direct computation yields%
\begin{eqnarray*}
\mathcal{I} &=&\left( \ln P\right) _{xy}=\frac{1}{\left( x-y\right) ^{2}}%
\allowbreak -\frac{1}{\arcsin ^{2}\frac{x-y}{x+y}}\tfrac{1}{\left(
x+y\right) ^{2}}-\frac{1}{2}\frac{1}{\arcsin \frac{x-y}{x+y}}\tfrac{x-y}{%
\sqrt{xy}\left( x+y\right) ^{2}} \\
&=&\frac{1}{\left( x-y\right) ^{2}}-\frac{4}{\left( x+y\right) ^{2}\left(
x-y\right) ^{2}}P^{2}\allowbreak -\frac{1}{\left( x+y\right) ^{2}\sqrt{xy}}P.
\end{eqnarray*}%
Using the known inequality $P>G=\sqrt{xy}$, we get $\allowbreak $%
\begin{equation*}
\mathcal{I}<\frac{1}{\left( x-y\right) ^{2}}-\frac{4}{\left( x+y\right)
^{2}\left( x-y\right) ^{2}}xy\allowbreak -\frac{1}{\left( x+y\right) ^{2}%
\sqrt{xy}}\sqrt{xy}=0.
\end{equation*}

(ii) In the same way, we have 
\begin{eqnarray*}
\mathcal{I} &=&\left( \ln T\right) _{xy}=\frac{1}{\left( x-y\right) ^{2}}-%
\frac{y}{\arctan ^{2}\frac{x-y}{x+y}}\allowbreak \frac{x}{\left(
x^{2}+y^{2}\right) ^{2}}-\frac{1}{\arctan \frac{x-y}{x+y}}\frac{x^{2}-y^{2}}{%
\left( x^{2}+y^{2}\right) ^{2}} \\
&=&\allowbreak \frac{1}{\left( x-y\right) ^{2}}-\frac{4xy}{\left(
x^{2}+y^{2}\right) ^{2}\left( x-y\right) ^{2}}T^{2}-\frac{2\left( x+y\right) 
}{\left( x^{2}+y^{2}\right) ^{2}}T\allowbreak .
\end{eqnarray*}%
The known inequality $T>A=\left( x+y\right) /2$ results in 
\begin{eqnarray*}
\mathcal{I} &<&\frac{1}{\left( x-y\right) ^{2}}-\frac{4xy}{\left(
x^{2}+y^{2}\right) ^{2}\left( x-y\right) ^{2}}\left( \frac{x+y}{2}\right)
^{2}-\frac{2\left( x+y\right) }{\left( x^{2}+y^{2}\right) ^{2}}\left( \frac{%
x+y}{2}\right) \\
&=&\allowbreak -\frac{xy}{\left( x^{2}+y^{2}\right) ^{2}}<0.
\end{eqnarray*}

(iii) We have 
\begin{eqnarray*}
\mathcal{I} &=&\left( \ln N\right) _{xy}=\frac{1}{\left( x-y\right) ^{2}}-%
\frac{1}{\func{arcsinh}^{2}\frac{x-y}{x+y}}\tfrac{2xy}{\left(
x^{2}+y^{2}\right) \left( x+y\right) ^{2}}-\tfrac{\sqrt{2}\left(
x^{2}+y^{2}+xy\right) }{\left( x+y\right) ^{2}\left( \sqrt{x^{2}+y^{2}}%
\right) ^{3}}\frac{x-y}{\func{arcsinh}\frac{x-y}{x+y}} \\
&=&\frac{1}{\left( x-y\right) ^{2}}-\tfrac{8xy}{\left( x-y\right) ^{2}\left(
x^{2}+y^{2}\right) \left( x+y\right) ^{2}}N^{2}-\tfrac{2\sqrt{2}\left(
x^{2}+y^{2}+xy\right) }{\left( x+y\right) ^{2}\left( \sqrt{x^{2}+y^{2}}%
\right) ^{3}}N.\allowbreak
\end{eqnarray*}%
Application of the inequality 
\begin{equation*}
N>\frac{A^{2}}{A_{2}}=\frac{\left( \frac{x+y}{2}\right) ^{2}}{\sqrt{\frac{%
x^{2}+y^{2}}{2}}}
\end{equation*}%
proved in \cite{Yang.arXiv:1208.0895} leads to 
\begin{equation*}
\mathcal{I}=\left( \ln N\right) _{xy}<\tfrac{1}{\left( x-y\right) ^{2}}-%
\tfrac{8xy}{\left( x-y\right) ^{2}\left( x^{2}+y^{2}\right) \left(
x+y\right) ^{2}}\left( \tfrac{\left( \frac{x+y}{2}\right) ^{2}}{\sqrt{\frac{%
x^{2}+y^{2}}{2}}}\right) ^{2}-\tfrac{2\sqrt{2}\left( x^{2}+y^{2}+xy\right) }{%
\left( x+y\right) ^{2}\left( \sqrt{x^{2}+y^{2}}\right) ^{3}}\tfrac{\left( 
\frac{x+y}{2}\right) ^{2}}{\sqrt{\frac{x^{2}+y^{2}}{2}}}=0.
\end{equation*}

(iv) Lastly, we prove the monotonicity of $Z_{p}$ in $p$. To this end, it
suffices to prove that the function 
\begin{equation*}
p\mapsto \ln Z_{p}=\frac{a^{p}}{a^{p}+b^{p}}\ln a+\frac{b^{p}}{a^{p}+b^{p}}%
\ln b
\end{equation*}%
is increasing on $\mathbb{R}$. In fact, we have 
\begin{equation*}
\frac{d}{dp}\left( \ln Z_{p}\right) =\allowbreak a^{p}b^{p}\frac{\left( \ln
a-\ln b\right) ^{2}}{\left( a^{p}+b^{p}\right) ^{2}}>0,
\end{equation*}%
which proves the monotonicity of $Z_{p}$ in $p$ and the whole proof is
completed.
\end{proof}

\section{Sharp inequalities among power-type means}

We first establish the relation between logarithmic mean of order $p$ and
the first Seiffert mean.

\begin{theorem}
\label{Theorem L_2<P}For $a,b>0$ with $a\neq b$, the inequality $L_{p}<P$ if
and only if $p\leq 2$.
\end{theorem}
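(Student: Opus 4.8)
The plan is to reduce the stated two‑variable equivalence to a single one‑variable inequality, to use the known monotonicity of $p\mapsto L_{p}$ in order to pin down the critical exponent $p=2$, and then to settle the borderline case $L_{2}<P$ by a Maclaurin‑coefficient comparison. First, since $L_{p}$ and $P$ are symmetric and positively homogeneous, I may assume $a>b>0$ and set $v=\frac{a-b}{a+b}\in(0,1)$, so that $\ln(a/b)=\ln\frac{1+v}{1-v}$. Recall from Lemma \ref{Lemma 2.1} (together with the fact, quoted in the text, that $(\ln L)_{xy}<0$) that $L_{p}$ is strictly increasing in $p$ on $\mathbb{R}$; hence for every $p\le2$ we have $L_{p}\le L_{2}$, with equality only at $p=2$. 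Thus the ``if'' direction reduces to showing $L_{2}<P$ for all $a\ne b$. Using $L_{2}(a,b)=\bigl(\frac{a^{2}-b^{2}}{2\ln(a/b)}\bigr)^{1/2}$ and $P(a,b)=\frac{a-b}{2\arcsin v}$, squaring and clearing positive factors turns $L_{2}<P$ into the clean equivalent form
\[
2\arcsin^{2}v<v\,\ln\frac{1+v}{1-v},\qquad v\in(0,1).
\]

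To establish this I would compare the two Maclaurin series directly. From the classical identity $\arcsin^{2}v=\frac12\sum_{n\ge1}\frac{(2v)^{2n}}{n^{2}\binom{2n}{n}}$ and $\ln\frac{1+v}{1-v}=2\sum_{n\ge0}\frac{v^{2n+1}}{2n+1}$, the displayed inequality follows as soon as one has the coefficientwise bound
\[
\frac{2^{2m+1}}{(m+1)^{2}\binom{2m+2}{m+1}}\le\frac{1}{2m+1}\qquad(m\ge0),
\]
i.e. $c_{m}:=\dfrac{(2m+1)\,2^{2m+1}}{(m+1)^{2}\binom{2m+2}{m+1}}\le1$. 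Here $c_{0}=c_{1}=1$, and
\[
\frac{c_{m+1}}{c_{m}}=\frac{2(m+1)^{2}}{(m+2)(2m+1)}=\frac{2m^{2}+4m+2}{2m^{2}+5m+2}<1\qquad(m\ge1),
\]
so $(c_{m})$ is nonincreasing and $c_{m}<1$ for $m\ge2$. Consequently
\[
v\,\ln\frac{1+v}{1-v}-2\arcsin^{2}v=\sum_{m\ge2}\frac{2\,(1-c_{m})}{2m+1}\,v^{2m+2}>0\qquad(0<v<1),
\]
which gives $L_{2}<P$ and completes the ``if'' direction.

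For the ``only if'' direction I would examine the behaviour as $a\to b$. Put $a=1+h$, $b=1-h$ and let $h\to0$. Expanding $\arcsin$ gives $P(1+h,1-h)=1-\frac16h^{2}+O(h^{4})$, while expanding $(1+h)^{p}-(1-h)^{p}$ and $\ln\frac{1+h}{1-h}$ and raising the quotient to the power $1/p$ gives $L_{p}(1+h,1-h)=1+\frac{p-3}{6}h^{2}+O(h^{4})$; hence $L_{p}(1+h,1-h)-P(1+h,1-h)=\frac{p-2}{6}h^{2}+O(h^{4})$. For $p>2$ this is strictly positive for all sufficiently small $h>0$, so $L_{p}<P$ fails; therefore $L_{p}<P$ for all $a\ne b$ forces $p\le2$. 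Combining the two halves yields the claimed equivalence.

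The main obstacle is clearly the borderline inequality $2\arcsin^{2}v<v\ln\frac{1+v}{1-v}$: both sides vanish to order $v^{2}$ and in fact agree through order $v^{4}$ — precisely because $c_{0}=c_{1}=1$ — so no crude estimate can work, and one genuinely needs the exact coefficients of $\arcsin^{2}v$ together with the monotonicity of $c_{m}$. An alternative would be a ``monotone form of L'Hospital's rule'' applied to $f(v)=v\ln\frac{1+v}{1-v}-2\arcsin^{2}v$, but controlling the sign of the relevant derivative ratio is not obviously easier, so the Maclaurin‑coefficient route appears cleanest.
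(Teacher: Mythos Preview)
Your proof is correct. Both you and the paper reduce by homogeneity to one variable, use the monotonicity of $p\mapsto L_{p}$ to pin the borderline at $p=2$, and obtain necessity from a local expansion near $a=b$.

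The genuine difference is in how the borderline inequality $L_{2}<P$ is established. You rewrite it as $2\arcsin^{2}v<v\ln\frac{1+v}{1-v}$ for $v=\frac{a-b}{a+b}\in(0,1)$ and then compare Maclaurin coefficients termwise via the classical series $\arcsin^{2}v=\tfrac12\sum_{n\ge1}\frac{(2v)^{2n}}{n^{2}\binom{2n}{n}}$, reducing the problem to the elementary ratio estimate $c_{m+1}/c_{m}=\frac{2(m+1)^{2}}{(m+2)(2m+1)}<1$. The paper instead works with $x=a/b$, sets
\[
f_{1}(x)=\frac{2(x+1)}{x-1}\arcsin^{2}\frac{x-1}{x+1}-\ln x,
\]
and observes that its derivative collapses to a \emph{negative perfect square},
\[
f_{1}'(x)=-\frac{\bigl(x-1-2\sqrt{x}\,\arcsin\frac{x-1}{x+1}\bigr)^{2}}{x(x-1)^{2}}<0,
\]
so $f_{1}(x)>f_{1}(1^{-})=0$. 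The paper's route is shorter and self-contained, requiring no external series identity and no induction; your series argument, while a bit longer, is entirely elementary and has the advantage of making transparent \emph{why} the first two Taylor coefficients cancel (your $c_{0}=c_{1}=1$), which is exactly the obstruction you flagged to any crude estimate. Either method is fully rigorous.
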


\begin{proof}
Due to the symmetry, we assume that $a<b$. Then inequality $L_{p}\left(
a,b\right) <P\left( a,b\right) $ is equivalent with 
\begin{equation}
\left( \frac{x^{p}-1}{p\ln x}\right) ^{1/p}<\frac{x-1}{2\arcsin \frac{x-1}{%
x+1}},  \label{L_p<P}
\end{equation}%
where\ $x=a/b\in \left( 0,1\right) $.

Necessity. If $L_{p}<P$, then we have 
\begin{equation*}
\lim_{x\rightarrow 1}\frac{\left( \frac{x^{p}-1}{p\ln x}\right) ^{1/p}-\frac{%
x-1}{2\arcsin \frac{x-1}{x+1}}}{\left( x-1\right) ^{2}}=\allowbreak \frac{1}{%
24}p-\frac{1}{12}\leq 0,
\end{equation*}%
which indicates that $p\leq 2$.

Sufficiency. We prove the inequality (\ref{L_p<P}) holds if $p\leq 2$. By
Theorem \ref{Theorem 2.2}, it suffices to show that the inequality (\ref%
{L_p<P}) holds if $p=2$. Let the function $f_{1}$ be defined on $\left(
0,1\right) $ by 
\begin{equation*}
f_{1}\left( x\right) =\allowbreak 2\frac{\left( x+1\right) }{x-1}\arcsin ^{2}%
\frac{x-1}{x+1}-\ln x.
\end{equation*}%
Differentiation yields 
\begin{eqnarray*}
f_{1}^{\prime }\left( x\right) &=&\allowbreak -4\frac{\arcsin ^{2}\frac{x-1}{%
x+1}}{\left( x-1\right) ^{2}}+4\left( \arcsin \frac{x-1}{x+1}\right) \frac{1%
}{\sqrt{x}\left( x-1\right) }-\frac{1}{x}\allowbreak \\
&=&-\frac{\left( x-1-2\sqrt{x}\arcsin \frac{x-1}{x+1}\right) ^{2}}{x\left(
x-1\right) ^{2}}<0,
\end{eqnarray*}%
which shows that $f_{1}$ is decreasing on $\left( 0,1\right) $. Hence $%
f_{1}\left( x\right) >\lim_{x\rightarrow 1^{-}}f_{1}\left( x\right) =0$, and
so 
\begin{equation*}
\frac{x^{2}-1}{2\ln x}<\frac{\left( x-1\right) ^{2}}{4\left( \arcsin \frac{%
x-1}{x+1}\right) ^{2}},
\end{equation*}%
this proves the sufficiency and the proof is complete.
\end{proof}

Secondly, we show the relation between the first Seiffert mean of order $p$
and Neuman-S\'{a}ndor mean.

\begin{theorem}
\label{Theorem P_2<N}For $a,b>0$ with $a\neq b$, the inequality $P_{p}<N$ if
and only if $p\leq 2$.
\end{theorem}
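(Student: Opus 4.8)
The plan is to follow the template of Theorem \ref{Theorem L_2<P}. By symmetry I assume $a<b$ and put $x=a/b\in(0,1)$; since $P$ and $N$ are homogeneous, $P_{p}(a,b)<N(a,b)$ is equivalent to
\[
\left(\frac{x^{p}-1}{2\arcsin\frac{x^{p}-1}{x^{p}+1}}\right)^{1/p}<\frac{x-1}{2\func{arcsinh}\frac{x-1}{x+1}},\qquad x\in(0,1).
\]
For the necessity I let $x\to1$: a Taylor expansion gives
\[
\lim_{x\to1}\frac{1}{(x-1)^{2}}\left[\left(\frac{x^{p}-1}{2\arcsin\frac{x^{p}-1}{x^{p}+1}}\right)^{1/p}-\frac{x-1}{2\func{arcsinh}\frac{x-1}{x+1}}\right]=\frac{p-2}{12},
\]
and since the left side must be $\leq0$ whenever $P_{p}<N$, we obtain $p\leq2$.

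For the sufficiency, Theorem \ref{Theorem 2.2} says $P_{p}$ is strictly increasing in $p$, so it is enough to prove $P_{2}<N$, that is $P(a^{2},b^{2})<N^{2}(a,b)$. By homogeneity this reads $P(x^{2},1)<N^{2}(x,1)$, and here the convenient substitution is $x=(1-s)/(1+s)$ with $s\in(0,1)$, which gives $\frac{x-1}{x+1}=-s$ and $\frac{x^{2}-1}{x^{2}+1}=-\frac{2s}{1+s^{2}}$. Using the elementary identity $\arcsin\frac{2s}{1+s^{2}}=2\arctan s$ (valid on $(0,1)$), the two sides collapse to
\[
P(x^{2},1)=\frac{s}{(1+s)^{2}\arctan s},\qquad N^{2}(x,1)=\frac{s^{2}}{(1+s)^{2}\func{arcsinh}^{2}s},
\]
so that $P_{2}<N$ becomes the one-variable inequality
\[
\func{arcsinh}^{2}s<s\arctan s,\qquad s\in(0,1).
\]

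To settle this I introduce $g(s)=s\arctan s-\func{arcsinh}^{2}s$, which satisfies $g(0)=0$ and $g^{\prime}(0)=0$. Differentiating twice and simplifying yields
\[
g^{\prime\prime}(s)=\frac{2s}{(1+s^{2})^{3/2}}\left(\func{arcsinh}s-\frac{s}{\sqrt{1+s^{2}}}\right),
\]
and since the bracketed term vanishes at $s=0$ and has derivative $\frac{s^{2}}{(1+s^{2})^{3/2}}>0$, it is positive for $s>0$; hence $g^{\prime\prime}>0$, so $g^{\prime}$ increases from $g^{\prime}(0)=0$, so $g$ increases from $g(0)=0$, giving $g(s)>0$ for all $s>0$, in particular on $(0,1)$. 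The genuine work is concentrated in this last scalar inequality: the reduction to $p=2$ and the rationalising substitution are routine, but one must spot the nested monotonicities that make $g^{\prime\prime}$, and then $g^{\prime}$, manifestly positive. (Alternatively one could argue directly in the variable $x=a/b$ by building an auxiliary function and differentiating, exactly as in Theorem \ref{Theorem L_2<P}; the substitution $x=(1-s)/(1+s)$ is used only to make the algebra transparent.)
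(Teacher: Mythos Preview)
Your proof is correct. The necessity argument matches the paper's. For the sufficiency, however, you take a different route: the paper works directly in the variable $x=a/b$, forms the auxiliary function
\[
f_{2}(x)=2\,\frac{x+1}{x-1}\,\func{arcsinh}^{2}\frac{x-1}{x+1}-\arcsin\frac{x^{2}-1}{x^{2}+1},
\]
and shows in one differentiation that $f_{2}'(x)$ is the negative of a perfect square, namely
\[
f_{2}'(x)=-\left(\frac{2}{x-1}\,\func{arcsinh}\frac{x-1}{x+1}-\frac{\sqrt{2}}{\sqrt{x^{2}+1}}\right)^{2}<0,
\]
so $f_{2}(x)>f_{2}(1^{-})=0$ immediately. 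Your substitution $x=(1-s)/(1+s)$ together with the identity $\arcsin\frac{2s}{1+s^{2}}=2\arctan s$ reduces the problem to the clean scalar inequality $\func{arcsinh}^{2}s<s\arctan s$, which you then settle by a second-derivative argument with a nested monotonicity step. Your reduction is arguably more transparent and isolates a pleasant inequality of independent interest, at the cost of one extra differentiation; the paper's route is shorter once one spots the perfect-square structure in $f_{2}'$. Either way, the key content---that the case $p=2$ suffices by Theorem~\ref{Theorem 2.2} and that the $p=2$ inequality is true---is established.
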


\begin{proof}
Similarly, we assume that $a<b$. Then inequality $P_{p}\left( a,b\right)
<N\left( a,b\right) $ is equivalent with 
\begin{equation}
\left( \frac{x^{p}-1}{2\arcsin \frac{x^{p}-1}{x^{p}+1}}\right) ^{1/p}<\frac{%
x-1}{2\ln \frac{x-1+\sqrt{2\left( x^{2}+1\right) }}{x+1}},  \label{P_p<N}
\end{equation}%
where\ $x=a/b\in \left( 0,1\right) $.

Necessity. If $P_{p}<N$, then we have 
\begin{equation*}
\lim_{x\rightarrow 1}\frac{\left( \frac{x^{p}-1}{2\arcsin \frac{x^{p}-1}{%
x^{p}+1}}\right) ^{1/p}-\frac{x-1}{2\ln \frac{x-1+\sqrt{2\left(
x^{2}+1\right) }}{x+1}}}{\left( x-1\right) ^{2}}=\allowbreak \frac{1}{12}p-%
\frac{1}{6}\leq 0,
\end{equation*}%
which implies that $p\leq 2$.

Sufficiency. We prove the inequality (\ref{P_p<N}) holds if $p\leq 2$. By
Theorem \ref{Theorem 2.2}, it suffices to show that the inequality (\ref%
{P_p<N}) holds if $p=2$. We define the function $f_{2}$ by 
\begin{equation*}
f_{2}\left( x\right) =\allowbreak 2\frac{\ln ^{2}\frac{x-1+\sqrt{2\left(
x^{2}+1\right) }}{x+1}}{x-1}\left( x+1\right) -\arcsin \frac{x^{2}-1}{x^{2}+1%
},\text{ \ }x\in \left( 0,1\right) .
\end{equation*}%
Differentiation yields 
\begin{eqnarray*}
f_{2}^{\prime }\left( x\right) &=&-4\frac{\ln ^{2}\frac{x-1+\sqrt{2\left(
x^{2}+1\right) }}{x+1}}{\left( x-1\right) ^{2}}+4\frac{\ln \frac{x-1+\sqrt{%
2\left( x^{2}+1\right) }}{x+1}}{x-1}\allowbreak \frac{\sqrt{2}}{\sqrt{x^{2}+1%
}}-\allowbreak \frac{2}{x^{2}+1}\allowbreak \\
&=&-\left( 2\frac{\ln \frac{x-1+\sqrt{2\left( x^{2}+1\right) }}{x+1}}{x-1}-%
\frac{\sqrt{2}}{\sqrt{x^{2}+1}}\right) ^{2}<0,
\end{eqnarray*}%
which implies that $f_{2}$ is decreasing on $\left( 0,1\right) $. Therefore $%
f_{2}\left( x\right) >\lim_{x\rightarrow 1^{-}}f_{2}\left( x\right) =0$, and
then 
\begin{equation*}
\frac{x^{2}-1}{2\arcsin \frac{x^{2}-1}{x^{2}+1}}<\left( \frac{x-1}{2\ln 
\frac{x-1+\sqrt{2\left( x^{2}+1\right) }}{x+1}}\right) ^{2},
\end{equation*}%
this proves the sufficiency and the proof is finished.
\end{proof}

Thirdly, let us prove the inequality for Neuman-Sandor mean and Heronian
mean of order $p$.

\begin{theorem}
\label{Theorem N<He_2}For $a,b>0$ with $a\neq b$, the inequality $N<He_{p}$
if and only if $p\geq 2$.
\end{theorem}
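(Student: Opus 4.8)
The plan is to follow exactly the template established in Theorems \ref{Theorem L_2<P} and \ref{Theorem P_2<N}. By the symmetry of both $N$ and $He_p$ we may assume $a<b$ and set $x=a/b\in(0,1)$. The inequality $N<He_p$ becomes, after writing $N(a,b)=\tfrac{a-b}{2\operatorname{arcsinh}\frac{a-b}{a+b}}$ and $He_p(a,b)=He^{1/p}(a^p,b^p)$,
\begin{equation*}
\frac{x-1}{2\ln\frac{x-1+\sqrt{2(x^2+1)}}{x+1}}<\left(\frac{x^p+(x^{p/2})\cdot 1+1}{3}\right)^{1/p},
\end{equation*}
i.e. $N<He_p^{\,}$ with $He_p=\bigl(\tfrac13(x^p+x^{p/2}+1)\bigr)^{1/p}$ after normalizing $b=1$. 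For the necessity, I would expand the difference of the two sides divided by $(x-1)^2$ as $x\to 1$; since $N$ has the power-series behaviour already used above (its "$A_p$-exponent" near the diagonal is $4/3$, cf. \eqref{N-A_p}) and $He_p$ behaves like $A_{p/2}$ to second order near the diagonal, the limit will come out to be an affine function of $p$ that is $\le 0$ precisely when $p\ge 2$; this forces the condition $p\ge 2$.

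For the sufficiency, by Theorem \ref{Theorem 2.2} (monotonicity of $He_p$ in $p$, established earlier via Lemma \ref{Lemma 2.1} since $(\ln He)_{xy}<0$) it suffices to treat the boundary case $p=2$, where $He_2(a,b)=\bigl(\tfrac{a^2+ab+b^2}{3}\bigr)^{1/2}$. So I must show
\begin{equation*}
\frac{x-1}{2\ln\frac{x-1+\sqrt{2(x^2+1)}}{x+1}}<\sqrt{\frac{x^2+x+1}{3}},\qquad x\in(0,1),
\end{equation*}
equivalently (both sides positive) $\dfrac{3(x-1)^2}{4(x^2+x+1)}<\ln^2\dfrac{x-1+\sqrt{2(x^2+1)}}{x+1}$. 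Mimicking the earlier proofs, I would introduce
\begin{equation*}
f_3(x)=2\,\frac{x^2+x+1}{x-1}\ln^2\frac{x-1+\sqrt{2(x^2+1)}}{x+1}-\frac{3}{2}(x-1),
\end{equation*}
or a similar auxiliary function chosen so that $f_3(1^-)=0$ and $f_3$ is monotone on $(0,1)$. Computing $f_3'$ and hoping — as happened twice above — that it collapses to a perfect square (times a negative rational function) would give $f_3'<0$, hence $f_3>f_3(1^-)=0$, which is the desired inequality.

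The main obstacle is precisely that last step: whether the derivative of the natural auxiliary function factors as cleanly as $-\bigl(\cdots\bigr)^2/(\text{positive})$ the way $f_1'$ and $f_2'$ did. The presence of the extra linear term $ab$ inside $He_2$ (versus the pure quadratic $x^2+1$ in the $N$ versus $A_2$ comparisons) means the coefficient bookkeeping is heavier, and it may be necessary to first rewrite the target inequality in the Neuman--Sándor variable $t=\operatorname{arcsinh}\frac{x-1}{x+1}$ (so that $\frac{x-1}{x+1}=\sinh t$, $x=\frac{1+\sinh t}{1-\sinh t}$ with $t\in(-\ln(1+\sqrt2),0)$), turning the logarithm into $t$ itself and reducing everything to an inequality in hyperbolic functions of a single variable, which can then be attacked by series expansion or by a further monotonicity argument on the ratio of the two sides. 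If the perfect-square miracle fails, the fallback is to prove $f_3'<0$ by bounding its numerator—a polynomial-plus-$\operatorname{arcsinh}$ expression—using the elementary estimates $u-\frac{u^3}{6}<\operatorname{arcsinh} u<u$ on the relevant range.
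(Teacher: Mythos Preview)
Your overall strategy---necessity via the $(x-1)^2$ expansion at $x=1$, sufficiency reduced to $p=2$ by the monotonicity of $He_p$---matches the paper exactly. The divergence is in the choice of auxiliary function for the $p=2$ case, and here the paper makes a sharper move than you do.

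You follow the template of $f_1,f_2$ literally: square the inequality so that the transcendental $\operatorname{arcsinh}$ enters quadratically, then hope the derivative is a negative perfect square in $\ln u$. But those earlier proofs worked because \emph{both} means involved a transcendental, so squaring one of them against the other produced a quadratic whose discriminant happened to vanish. Here $He_2(x,1)=\sqrt{(x^2+x+1)/3}$ is algebraic, and that changes the right play. The paper does \emph{not} square; it simply rewrites $N<He_2$ as
\[
f_3(x)=\frac{x-1}{\sqrt{(x^2+x+1)/3}}-2\ln\frac{x-1+\sqrt{2(x^2+1)}}{x+1}
\]
and differentiates. Because $He_2$ is algebraic and $(\operatorname{arcsinh})'$ is algebraic, $f_3'$ contains \emph{no} transcendental terms at all; it is a pure algebraic expression that factors as
\[
f_3'(x)=-\sqrt{2}\,\frac{\bigl(\sqrt{(x^2+1)/2}-\sqrt{(x^2+x+1)/3}\bigr)^2\bigl(\sqrt{(x^2+1)/2}+2\sqrt{(x^2+x+1)/3}\bigr)}{(x+1)\sqrt{x^2+1}\,\bigl((x^2+x+1)/3\bigr)^{3/2}}<0.
\]
So the ``perfect-square miracle'' still occurs, but in the algebraic world rather than as a quadratic in $\ln u$. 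Your squared version would leave $\ln^2 u$ and $\ln u$ in $f_3'$, with a coefficient on $\ln^2 u$ proportional to $(x^2-2x-2)/(x-1)^2$, which is not of the form needed for a clean negative square; you would then be forced into your fallback estimates. The paper's linear-in-$\operatorname{arcsinh}$ choice sidesteps this entirely. The lesson: when one side of the comparison is already algebraic, don't square---divide by it instead, so that differentiation kills the remaining transcendental.
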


\begin{proof}
We assume that $a<b$. Then inequality $N\left( a,b\right) <He_{p}\left(
a,b\right) $ is equivalent with 
\begin{equation}
\frac{x-1}{2\ln \frac{x-1+\sqrt{2\left( x^{2}+1\right) }}{x+1}}<\left( \frac{%
x^{p}+x^{p/2}+1}{3}\right) ^{1/p},  \label{N<He_p}
\end{equation}%
where\ $x=a/b\in \left( 0,1\right) $.

Necessity. If $N<He_{p}$, then we have 
\begin{equation*}
\lim_{x\rightarrow 1}\frac{\frac{x-1}{2\ln \frac{x-1+\sqrt{2\left(
x^{2}+1\right) }}{x+1}}-\left( \frac{x^{p}+x^{p/2}+1}{3}\right) ^{1/p}}{%
\left( x-1\right) ^{2}}=\allowbreak \allowbreak \frac{1}{6}-\frac{1}{12}%
p\leq 0,
\end{equation*}%
which reveals that $p\geq 2$.

Sufficiency. We prove the inequality (\ref{N<He_p}) holds if $p\geq 2$. By
Theorem \ref{Theorem 2.2}, it suffices to show that the inequality (\ref%
{N<He_p}) holds if $p=2$. To this end, we define the function $f_{3}$ by 
\begin{equation*}
f_{3}\left( x\right) =\allowbreak \frac{x-1}{\sqrt{\frac{x^{2}+x+1}{3}}}%
-2\ln \frac{x-1+\sqrt{2\left( x^{2}+1\right) }}{x+1}.
\end{equation*}%
Differentiation yields 
\begin{eqnarray*}
f_{3}^{\prime }\left( x\right) &=&\allowbreak \frac{1}{\sqrt{\frac{x^{2}+x+1%
}{3}}}-\frac{2x+1}{6}\frac{x-1}{\left( \frac{x^{2}+x+1}{3}\right) ^{\frac{3}{%
2}}}-\frac{2\sqrt{2}}{\left( x+1\right) \sqrt{x^{2}+1}}\allowbreak \\
&=&\sqrt{2}\frac{x\sqrt{\frac{x^{2}+1}{2}}-2\left( \frac{x^{2}+x+1}{3}%
\right) ^{3/2}+\left( \frac{x^{2}+1}{2}\right) ^{3/2}}{\left( x+1\right) 
\sqrt{x^{2}+1}\left( \sqrt{\frac{x^{2}+x+1}{3}}\right) ^{3}} \\
&=&-\sqrt{2}\tfrac{\left( \sqrt{\frac{x^{2}+1}{2}}-\sqrt{\frac{1}{3}x+\frac{1%
}{3}x^{2}+\frac{1}{3}}\right) ^{2}\left( \sqrt{\frac{x^{2}+1}{2}}+2\sqrt{%
\frac{1}{3}x+\frac{1}{3}x^{2}+\frac{1}{3}}\right) }{\left( x+1\right) \sqrt{%
x^{2}+1}\left( \sqrt{\frac{x^{2}+x+1}{3}}\right) ^{3}}<0,
\end{eqnarray*}%
which shows that $f_{3}$ is decreasing on $\left( 0,1\right) $. Hence $%
f_{3}\left( x\right) >\lim_{x\rightarrow 1^{-}}f_{3}\left( x\right) =0$, and
so 
\begin{equation*}
\frac{x-1}{\sqrt{\frac{x^{2}+x+1}{3}}}>2\ln \frac{x-1+\sqrt{2\left(
x^{2}+1\right) }}{x+1},
\end{equation*}%
which implies that the inequality (\ref{N<He_p}) holds if $p=2$, that is,
the sufficiency holds.

Thus the proof ends.
\end{proof}

Next we further prove the sharp inequality for identric (exponential) mean
and power-exponential mean of order $p$.

\begin{theorem}
\label{Theorem I<Z_1/3}For $a,b>0$ with $a\neq b$, the inequality $I<Z_{p}$
if and only if $p\geq 1/3$.
\end{theorem}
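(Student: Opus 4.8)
The plan is to follow the same template that produced Theorems \ref{Theorem L_2<P}--\ref{Theorem N<He_2}. First I would reduce to a one-variable inequality: setting $x=a/b\in(0,1)$ (using symmetry), the claim $I<Z_p$ becomes, after writing $I$ and $Z_p$ explicitly,
\begin{equation*}
e^{-1}\left(\frac{x^{x}}{1}\right)^{1/(x-1)}x^{-x/(x-1)}\cdot(\text{stuff})\ \text{vs.}\ \left(x^{\frac{x^{p}}{x^{p}+1}}\right)^{1/p},
\end{equation*}
more cleanly: $\ln I=\frac{x\ln x}{x-1}-1$ and $\ln Z_p=\frac{1}{p}\cdot\frac{x^{p}\ln x}{x^{p}+1}$, so the inequality $I<Z_p$ is equivalent to $\frac{x\ln x}{x-1}-1<\frac{1}{p}\cdot\frac{x^{p}\ln x}{x^{p}+1}$ for all $x\in(0,1)$.

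For the necessity direction, I would expand the difference $\ln Z_p-\ln I$ (or the difference of the means themselves divided by $(x-1)^2$) in a Taylor series around $x=1$; the leading coefficient should be a linear function of $p$ that vanishes exactly at $p=1/3$, forcing $p\ge 1/3$. This is the routine part, mirroring the $\lim_{x\to1}$ computations in the earlier proofs.

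For sufficiency, by Theorem \ref{Theorem 2.2} ($Z_p$ is increasing in $p$) it suffices to treat the boundary case $p=1/3$, i.e. to show $I<Z_{1/3}$. I would introduce an auxiliary function $f_4$ on $(0,1)$ capturing $\ln Z_{1/3}-\ln I$ after clearing denominators — something like
\begin{equation*}
f_4(x)=\frac{3x^{1/3}\ln x}{x^{1/3}+1}-\frac{x\ln x}{x-1}+1,
\end{equation*}
or a polynomial-in-$x^{1/3}$ reformulation via the substitution $t=x^{1/3}$, which is likely to make $f_4'$ factor as a perfect square times a positive factor (exactly the phenomenon seen in $f_1',f_2',f_3'$). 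Then $f_4(x)>\lim_{x\to1^-}f_4(x)=0$ gives the result. The substitution $t=x^{1/3}$ may be essential to get a rational/algebraic expression whose derivative visibly has constant sign.

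The main obstacle will be the sufficiency step: unlike the Seiffert-type means, $Z$ and $I$ both involve $\ln x$ in an essential way, so after differentiating $f_4$ one does not automatically land on an algebraic square — one gets terms mixing $\ln x$, $x^{1/3}$, and $x-1$. I expect one must differentiate a second time (a "differentiate twice to kill the logarithm" maneuver): show $f_4(1^-)=0$, and reduce $f_4'\gtrless0$ to an inequality where, after one more differentiation or after isolating $\ln x$, the logarithm is bounded by a rational function of $t=x^{1/3}$ whose remaining inequality is polynomial and provably a square (or a product of positive factors). Alternatively, one can try to exploit the known chain $A_{2/3}<I$ from \eqref{I-A_p} together with $Z_{1/3}>A_{2/3}$ from \eqref{Z_p-A_q} — but that only yields $I$ and $Z_{1/3}$ both exceed $A_{2/3}$, not their mutual order, so it does not shortcut the argument and a direct monotonicity analysis of $f_4$ is needed.
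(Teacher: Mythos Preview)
Your plan for the necessity direction is exactly what the paper does (Taylor-expand near $x=1$ and read off the critical $p$). One slip: $\ln Z_{p}(x,1)=\dfrac{x^{p}}{x^{p}+1}\ln x$, with no extra $1/p$ in front --- the $p$ coming from $(a^{p})^{\cdots}$ cancels against the $1/p$ from the outer root. With the correct expression the limit is $\tfrac{1}{12}-\tfrac{p}{4}$, yielding $p\ge 1/3$ as expected.

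For sufficiency your route genuinely differs from the paper's. The paper does \emph{not} build an auxiliary $f_{4}$ at all: it simply quotes the inequality $I<Z_{1/3}$ from \cite[(5.7)]{Yang.10(3).2007} and then invokes the monotonicity of $Z_{p}$ from Theorem~\ref{Theorem 2.2}. So the paper outsources the hard step, whereas you propose a self-contained argument. Your instinct that the perfect-square factorisation seen in $f_{1}',f_{2}',f_{3}'$ will not recur is correct. After the substitution $t=x^{1/3}$, the claim $I<Z_{1/3}$ becomes $g(t):=3t(t^{2}+1)\ln t-(t^{3}-1)(t+1)>0$ on $(0,1)$; here $g(1)=g'(1)=g''(1)=g'''(1)=0$, and only at the fourth derivative does one reach $g^{(4)}(t)=18/t+6/t^{3}-24$, which is visibly positive on $(0,1)$. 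So your ``differentiate to kill the logarithm'' idea works, but it takes four layers rather than one or two, and no square appears. This buys self-containment at the cost of more bookkeeping; the paper's citation sidesteps the whole computation. Your dismissal of the $A_{2/3}$ shortcut is also right --- knowing $I>A_{2/3}$ and $Z_{1/3}>A_{2/3}$ says nothing about their mutual order.
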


\begin{proof}
We assume that $a<b$. Then inequality $I\left( a,b\right) <Z_{p}\left(
a,b\right) $ is equivalent with 
\begin{equation}
e^{-1}x^{x/\left( x-1\right) }<x^{\frac{x^{p}}{x^{p}+1}},  \label{I-Z_p}
\end{equation}%
where\ $x=a/b\in \left( 0,1\right) $.

Necessity. If $I\left( a,b\right) <Z_{p}\left( a,b\right) $ is true, then we
have 
\begin{equation*}
\lim_{x\rightarrow 1}\frac{e^{-1}x^{x/\left( x-1\right) }-x^{\frac{x^{p}}{%
x^{p}+1}}}{\left( x-1\right) ^{2}}=\frac{1}{12}-\frac{1}{4}p\leq 0,
\end{equation*}%
which yields $p\geq 1/3$.

Sufficiency. It has been proved in \cite[(5.7)]{Yang.10(3).2007} that $%
I<Z_{1/3}$. By the monotonicity proved in Theorem \ref{Theorem 2.2}, it is
derived that $I<Z_{1/3}\leq Z_{p}$ if $p\geq 1/3$.

This completes the proof.
\end{proof}

Lastly, we will show that the inequality $Z_{2/3}<Y$ is the best.

\begin{theorem}
\label{Theorem Z_2/3<Y}For $a,b>0$ with $a\neq b$, the inequality $Z_{p}<Y$
if and only if $p\leq 2/3$.
\end{theorem}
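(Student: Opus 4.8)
The plan is to follow the same two‑part template used in Theorems~\ref{Theorem L_2<P}--\ref{Theorem N<He_2}: reduce the claimed equivalence ``$Z_p<Y$ iff $p\le 2/3$'' to an inequality in the single variable $x=a/b\in(0,1)$, then split into necessity (a Taylor expansion near $x=1$) and sufficiency (reduce to the endpoint $p=2/3$ by the monotonicity of $Z_p$ proved in Theorem~\ref{Theorem 2.2}, and prove that one case by a monotone‑function argument). First I would write everything out explicitly: using $Z_p(a,b)=Z^{1/p}(a^p,b^p)$ and the formula \eqref{Z}, $\ln Z_p = \bigl(a^p\ln a + b^p\ln b\bigr)/(a^p+b^p)$, while from \eqref{Y}, \eqref{I}, \eqref{G}, \eqref{L} one has $\ln Y = \ln I + 1 - G^2/L^2$ with $\ln I = \tfrac{a\ln a - b\ln b}{a-b}-1$ and $G^2/L^2 = ab(\ln a-\ln b)^2/(a-b)^2$. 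Homogeneity lets me set, say, $b=1$ and $x=a\in(0,1)$, so the inequality $Z_p<Y$ becomes a concrete one‑variable inequality $\varphi_p(x)<0$ on $(0,1)$.

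For necessity, I would compute the second‑order asymptotics of $\ln Y - \ln Z_p$ (equivalently of $Y-Z_p$, normalized by $(x-1)^2$) as $x\to 1$. As in the previous theorems this limit will be an affine function of $p$ of the form $c_1 - c_2 p$ with $c_2>0$; the necessity $Z_p<Y \Rightarrow$ this limit $\le 0$ then forces $p\le 2/3$ once I check that the coefficients give exactly the threshold $2/3$ (the expected leading term is $\tfrac{1}{12}-\tfrac{1}{8}p$ or a similar expression vanishing at $p=2/3$). This is routine Taylor expansion; the only care needed is carrying enough terms of $\ln(1+x)$, $x^p$, etc., to order $(x-1)^2$.

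For sufficiency it suffices, by Theorem~\ref{Theorem 2.2} (which gives $Z_p\le Z_{2/3}$ for $p\le 2/3$), to prove $Z_{2/3}<Y$. This is the crux. I would introduce an auxiliary function analogous to $f_1,f_2,f_3$: set $g(x)=\ln Y(x,1)-\ln Z_{2/3}(x,1)$ (or, better, clear denominators so that the resulting $g$ has a clean derivative), and aim to show $g(x)>0$ on $(0,1)$ with $g(1^-)=0$. Differentiating, I expect — as happened in each earlier proof — that $g'(x)$ collapses to a manifestly signed expression, most likely a negative multiple of a perfect square divided by a positive factor, giving $g'<0$ on $(0,1)$, whence $g(x)>g(1^-)=0$. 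If the first derivative is not immediately a perfect square, the fallback is to differentiate once more, or to use the monotone‑l'Hôpital rule (``Yang's lemma'' style: write $g$ as a quotient $g_1/g_2$ with $g_1(1)=g_2(1)=0$ and show $g_1'/g_2'$ is monotone), which is the standard tool behind these computations.

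The main obstacle I anticipate is purely computational: the presence of three transcendental ingredients in $Y$ — the logarithmic mean $L$, the identric mean $I$, and the term $\exp(1-G^2/L^2)$ — makes the derivative of the auxiliary function much bushier than in the $P$, $N$ cases, so recognizing the perfect‑square structure (and choosing the right normalization of $g$ to make it appear) will take some algebraic ingenuity. A secondary subtlety is that $Z_{2/3}<Y$ is asserted to be sharp, so whatever normalization I choose must genuinely force the boundary case; I would double‑check sharpness by verifying that the necessity computation and the sufficiency argument meet exactly at $p=2/3$ and that the inequality $Z_p<Y$ fails for $p$ slightly larger than $2/3$ near $x=1$.
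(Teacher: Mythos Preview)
Your architecture matches the paper's exactly: reduce to $x=a/b\in(0,1)$, obtain necessity from the second-order Taylor coefficient at $x=1$ (the paper computes $\lim_{x\to1}(\ln Z_p-\ln Y)/(x-1)^2=\tfrac14 p-\tfrac16$, forcing $p\le 2/3$), and for sufficiency use the monotonicity of $Z_p$ in $p$ from Theorem~\ref{Theorem 2.2} to reduce to the single endpoint $p=2/3$. The one divergence is that last step: the paper does \emph{not} prove $Z_{2/3}<Y$ in situ but simply cites \cite[(5.12)]{Yang.10(3).2007}, just as it cited \cite[(5.7)]{Yang.10(3).2007} for $I<Z_{1/3}$ in Theorem~\ref{Theorem I<Z_1/3}. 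Your plan to establish $Z_{2/3}<Y$ directly via an auxiliary function $g(x)=\ln Y(x,1)-\ln Z_{2/3}(x,1)$ is more self-contained and is a legitimate route; however, as you correctly anticipate, the mixture of $x\ln x/(x-1)$, $x\ln^2 x/(x-1)^2$, and $x^{2/3}/(x^{2/3}+1)$ makes a one-line perfect-square collapse of $g'$ much less likely than for $f_1',f_2',f_3'$, so expect to need your fallback (a further derivative, or a monotone-quotient argument after the substitution $x=t^3$ to clear the fractional power).
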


\begin{proof}
We assume that $a<b$. Then inequality $Z_{p}\left( a,b\right) <Y\left(
a,b\right) $ is equivalent with 
\begin{equation}
x^{\frac{x^{p}}{x^{p}+1}}<e^{-1}x^{x/\left( x-1\right) }\exp \left( 1-\frac{%
x\ln ^{2}x}{\left( x-1\right) ^{2}}\right) ,  \label{Y-Z_p}
\end{equation}%
where\ $x=a/b\in \left( 0,1\right) $.

Necessity. If $Z_{p}\left( a,b\right) <Y\left( a,b\right) $ is valid, then
we have 
\begin{equation*}
\lim_{x\rightarrow 1}\tfrac{\ln Z_{p}\left( x,1\right) -\ln Y\left(
x,1\right) }{\left( x-1\right) ^{2}}=\lim_{x\rightarrow 1}\tfrac{\frac{x^{p}%
}{x^{p}+1}\ln x-\frac{x}{x-1}\ln x+1-\left( 1-\frac{x\ln ^{2}x}{\left(
x-1\right) ^{2}}\right) }{\left( x-1\right) ^{2}}=\frac{1}{4}p-\frac{1}{6}%
\leq 0,
\end{equation*}%
which leads to $p\leq 2/3$.

Sufficiency. It has been proved in \cite[(5.12)]{Yang.10(3).2007} that $%
Z_{2/3}<Y$. By the monotonicity of $Z_{p}$ in $p$, it is derived that $%
Y>Z_{2/3}\geq Z_{p}$ if $p\leq 2/3$.

This completes the proof.
\end{proof}

\section{Remarks and a conjecture}

\begin{remark}
\label{Remark 4.1}With $a^{p}\rightarrow a$, $b^{p}\rightarrow b$, the Lemma %
\ref{Theorem P_2<N} can be restated as:

For $a,b>0$ with $a\neq b$, the inequalitiesit $P<N_{p}$ holds if and only
if $p\geq 1/2$.
\end{remark}

\begin{remark}
\label{Remark 4.2}In the same way, the Lemma \ref{Theorem N<He_2} is
equivalent with:

For $a,b>0$ with $a\neq b$, the inequalitiesit $N_{p}<He$ holds if and only
if $p\leq 1/2$.
\end{remark}

\begin{remark}
\label{Remark 4.3}From Theorem \ref{Theorem L_2<P}, Remark \ref{Remark 4.1}
and \ref{Remark 4.2} the chain of inequalities for means (\ref{Yang1}) can
be refined as 
\begin{equation}
L_{2}<P<N_{1/2}<He<A_{2/3}<I<Z_{1/3}<Y_{1/2}.  \label{Yang3}
\end{equation}%
Also, it is clear that all the constants located in lower right corner are
the best.
\end{remark}

The chain of inequalities for means (\ref{Yang3}) is very nice.
Unfortunately, it is not contain the second power-type Seiffert mean $T_{p}$%
. From (\ref{T_P-He}) and (\ref{Yang1}) it is easy to obtained that 
\begin{equation}
T_{2/5}<He<A_{2/3}<I<Z_{1/3}<Y_{1/2}.  \label{Chu-Yang}
\end{equation}%
If $N_{1/2}<$ $T_{2/5}$ holds, then we can get a more nice chain of
inequalities for power-type means 
\begin{equation}
L_{2}<P<N_{1/2}<T_{2/5}<He<A_{2/3}<I<Z_{1/3}<Y_{1/2}.  \label{Yang4}
\end{equation}%
Computation by mathematical software yields that if $N<T_{4/5}$ then 
\begin{equation*}
\lim_{x\rightarrow 1}\frac{N\left( x,1\right) -T_{p}\left( x,1\right) }{%
\left( x-1\right) ^{2}}=\frac{1}{6}-\frac{5}{24}p\leq 0,
\end{equation*}%
which implies that $p\geq 4/5$. And, we have 
\begin{equation*}
N\left( 0^{+},1\right) -T_{4/5}\left( 0^{+},1\right) =\frac{1}{2\ln \left( 
\sqrt{2}+1\right) }-\left( \frac{2}{\pi }\right) ^{5/4}<0.
\end{equation*}%
Hence we propose the following conjecture:

\begin{conjecture}
For $a,b>0$ with $a\neq b$, the inequalities $N<T_{p}$ holds if and only if $%
p\geq 4/5$.
\end{conjecture}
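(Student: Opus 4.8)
The plan is to follow the same template that worked for Theorems \ref{Theorem L_2<P}--\ref{Theorem Z_2/3<Y}. For the necessity direction, one assumes $N < T_p$ and lets $x \to 1$ in the normalized inequality (with $x = a/b$), using the Taylor expansion already recorded in the excerpt, namely $\lim_{x\to 1}\frac{N(x,1)-T_p(x,1)}{(x-1)^2} = \frac16 - \frac{5}{24}p \leq 0$, which forces $p \geq 4/5$. For sufficiency, since $T_p$ is strictly increasing in $p$ by Theorem \ref{Theorem 2.2}, it suffices to prove $N < T_{4/5}$. Writing this out with $x = a/b \in (0,1)$, the claim $N(a,b) < T_{4/5}(a,b)$ is equivalent to
\begin{equation*}
\frac{x-1}{2\ln\frac{x-1+\sqrt{2(x^2+1)}}{x+1}} < \left(\frac{x^{4/5}-1}{2\arctan\frac{x^{4/5}-1}{x^{4/5}+1}}\right)^{5/4}.
\end{equation*}
The obstacle here, compared with the earlier theorems, is the exponent $4/5$: there is no clean power-substitution that turns both sides into algebraic expressions, so the slick "define $f$, show $f'$ is a perfect square with a sign, conclude monotonicity" trick does not obviously apply.

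My approach would be to first reduce the asymmetric exponent. Using the identity (\ref{M_pt}), $N < T_{4/5}$ on all of $\mathbb{R}_+^2$ is equivalent (substituting $a \to a^{5/4}$, $b \to b^{5/4}$, which is legitimate since both means are homogeneous) to $N_{5/4} < T$, i.e.
\begin{equation*}
\left(\frac{a^{5/4}-b^{5/4}}{2\,\mathrm{arcsinh}\frac{a^{5/4}-b^{5/4}}{a^{5/4}+b^{5/4}}}\right)^{4/5} < \frac{a-b}{2\arctan\frac{a-b}{a+b}}.
\end{equation*}
This at least puts the more tractable arctan factor in "standard" position. Then I would set $x = a/b$, and attempt to show that the single-variable function $g(x) = \frac45\ln N(x,1) - \ln T(x,1)$ (or an appropriate additive rearrangement clearing denominators, as in $f_1, f_2, f_3$) is negative on $(0,1)$ by locating its critical points and checking endpoint limits: $g(1^-) = 0$ from the Taylor data, and at $x \to 0^+$ one needs $N_{5/4}(0^+,1) < T(0^+,1)$, which is $\big(1/(2\ln(\sqrt2+1))\big)^{4/5} < 2/\pi$ — numerically $0.585^{0.8} \approx 0.684 < 0.637$? — one must check this carefully, since the conjecture's own "evidence" computes the reverse-looking quantity $N(0^+,1) - T_{4/5}(0^+,1) = \frac{1}{2\ln(\sqrt2+1)} - (2/\pi)^{5/4} < 0$, consistent with strict inequality holding at the endpoint.

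The main obstacle I anticipate is exactly the step that makes the earlier proofs short: showing the derivative has constant sign. For $f_1, f_2, f_3$ the derivative miraculously factored as (minus) a perfect square times a positive rational function; with the exponent $4/5$ intruding, I would not expect such a factorization, and the derivative's sign will likely require a genuine auxiliary lemma — perhaps bounding $\mathrm{arcsinh}$ and $\arctan$ against rational or algebraic functions of $x$ sharply enough to sandwich $g'$, or invoking a monotone-form-of-l'Hôpital argument on the ratio $\big(\ln N(x,1)\big)/\big(\ln T(x,1)\big)$ raised to the power $4/5$. Establishing such sharp elementary bounds, valid on the whole interval $(0,1)$ and tight enough near $x=1$ to respect the delicate coefficient $\frac16 - \frac{5}{24}\cdot\frac45 = 0$, is precisely why the authors leave this as a conjecture rather than a theorem: the second-order behavior at $x=1$ is critical (the leading Taylor coefficient vanishes at $p=4/5$), so any bound used must be asymptotically exact to second order there, and constructing one is the crux of the difficulty.
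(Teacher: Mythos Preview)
The paper does not prove this statement: it is explicitly proposed as a conjecture, with no proof given. The only supporting material the paper supplies is precisely the two pieces of evidence you cite --- the Taylor limit
\[
\lim_{x\to 1}\frac{N(x,1)-T_p(x,1)}{(x-1)^2}=\frac{1}{6}-\frac{5}{24}p,
\]
which handles the necessity direction, and the numerical endpoint check
\[
N(0^+,1)-T_{4/5}(0^+,1)=\frac{1}{2\ln(\sqrt{2}+1)}-\left(\frac{2}{\pi}\right)^{5/4}<0,
\]
which is merely consistency, not proof. So there is no ``paper's own proof'' to compare your proposal against; your recognition that the sufficiency step is the genuine obstacle, and that the perfect-square factorizations driving $f_1,f_2,f_3$ are unlikely to recur with the exponent $4/5$, matches the paper's implicit assessment in leaving it open.

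One minor numerical correction: in your endpoint sanity check for the reformulation $N_{5/4}<T$, the value $N(0^+,1)=1/(2\ln(\sqrt{2}+1))\approx 0.567$, not $0.585$; raising to the $4/5$ power gives roughly $0.635$, which is indeed (just barely) less than $2/\pi\approx 0.637$. So the endpoint inequality does hold in the direction you need, and your hesitation there was unwarranted --- though the extreme tightness of this margin is itself a signal of why a clean proof is elusive.
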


\end{document}